\documentclass[10pt,reqno]{amsart}
\usepackage{ucs}

\usepackage{amsmath, amssymb, amscd}
\usepackage{pb-diagram}

\usepackage[latin1]{inputenc}
\usepackage{amsmath}
\usepackage{amssymb}
\usepackage{epsfig}
\usepackage{graphicx}
\usepackage{psfrag}
\usepackage{xypic}

\newtheorem{theorem}{Theorem}[section]
\newtheorem{definition}[theorem]{Definition}

\newtheorem{proposition}[theorem]{Proposition}
\newtheorem{corollary}[theorem]{Corollary}

\newtheorem*{remark}{Remark}

\renewcommand{\epsilon}{\varepsilon}

\hyphenation{com-pact-i-fi-cation}
\hyphenation{dim-en-sional}
\hyphenation{Uhlen-beck}
\hyphenation{mon-o-pole}
\hyphenation{man-i-fold}
\hyphenation{mo-no-pole}
\hyphenation{re-du-ci-ble}
\hyphenation{re-du-ci-bles}
\hyphenation{geo-me-tric}
\hyphenation{complex-geo-me-tric}
\hyphenation{de-cou-page}

\DeclareMathAlphabet{\mathpzc}{OT1}{pzc}{m}{it}
\usepackage{mathrsfs}

\newcommand{\ol}{\overline}

\newcommand{\Z}{\mathbb{Z}}
\newcommand{\C}{\mathbb{C}}

\renewcommand{\qed}{$\hfill \square$ \smallskip \\}
\renewcommand{\phi}{\varphi}

\begin{document}
\thispagestyle{empty}
\title[The Kanenobu knots and Khovanov-Rozansky homology]{The Kanenobu knots and Khovanov-Rozansky homology}
\author{Andrew Lobb}

\email{lobb@math.sunysb.edu}
\address{Mathematics Department \\ Stony Brook University \\ Stony Brook NY 11794 \\ USA}

\begin{abstract}
Kanenobu has given infinite families of knots with the same HOMFLY polynomials.  We show that these knots also have the same $sl(n)$ and HOMFLY homologies, thus giving the first example of an infinite family of knots undistinguishable by these invariants.  This is a consequence of a structure theorem about the homologies of knots obtained by twisting up the ribbon of a ribbon knot with one ribbon.
\end{abstract}

\maketitle

\section{Context and results}
This paper has three sections.  In the first section we shall give our results and some context for them, postponing proofs for the second section.  In the final section we shall indicate a way to generalize our result.

\subsection{The Kanenobu knots}

\begin{figure}
\centerline{
{
\psfrag{p}{$p$}
\psfrag{q}{$q$}
\psfrag{-}{$-$}
\psfrag{ldots}{$\ldots$}
\psfrag{T(D)}{$T(D)$}
\psfrag{T-(D)}{$T^-(D)$}
\psfrag{T+(D)}{$T^+(D)$}
\includegraphics[height=2in,width=3.5in]{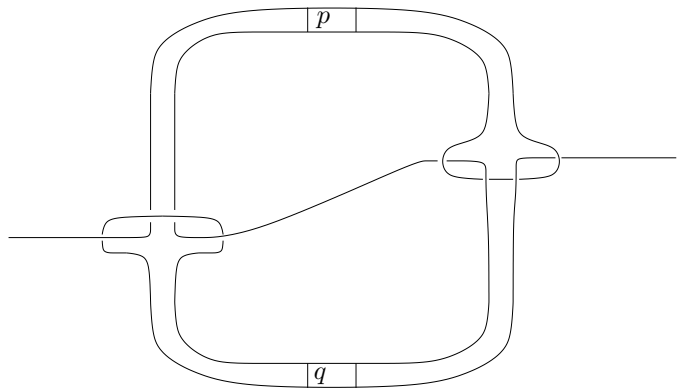}
}}
\caption{We give a diagram of the Kanenobu knots $K(p,q)$, passing through the point at infinity.  In this picture $|p|$ and $|q|$ are integers representing the number of half-twists added in the two strands, the sign of the half-twists depending on the signs of $p$ and~$q$.}
\label{knotskpq}
\end{figure}

Kanenobu has described knots $K(p,q)$ given by a pair of integers $p,q$ \cite{Kan}.  We draw these knots in Figure \ref{knotskpq}.  Kanenobu showed that these knots satisfy certain properties:

\begin{theorem}[Kanenobu \cite{Kan}]
\label{kanorig}
Suppose $p$ and $q$ are even.  Then we have
\begin{enumerate}
\item $K(p,q) = K(r,s) \Longleftrightarrow \{p,q\} = \{r,s\} \subset \Z$,
\item $\overline{K(p,q)} = K(-p,-q)$, and
\item $P(K(p,q)) = P(K(r,s))$ whenever $p+q = r+s$,
\end{enumerate}

\noindent where we write $\overline{K}$ for the mirror image of $K$, and $P(K)$ for the HOMFLY polynomial of $K$.  From Kanenobu's arguments, although he did not state it explicitly, we also have (2) and the $\Longleftarrow$ direction of (1) with no parity restriction on $p$ and $q$.
\end{theorem}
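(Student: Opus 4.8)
The plan is to inspect the diagram of $K(p,q)$ in Figure~\ref{knotskpq} and extract two symmetries, neither of which makes any use of the parity of $p$ or $q$; this is exactly what is implicit in Kanenobu's arguments.

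First, for the $\Longleftarrow$ direction of~(1): the only nontrivial case is $(r,s)=(q,p)$, since $\{p,q\}=\{r,s\}$ forces $(r,s)$ to be $(p,q)$ or $(q,p)$. I would exhibit an ambient isotopy of $S^3$ — concretely, a $\pi$-rotation about an axis in the plane of the diagram passing through the point at infinity — carrying the diagram of $K(p,q)$ to the diagram of $K(q,p)$. Under this rotation the two twist boxes labelled $p$ and $q$ are interchanged, the twist sense of each box is preserved, and the rest of the diagram is carried to itself. Hence $K(p,q)=K(q,p)$ for all $p,q\in\Z$, which is~(1)$\Longleftarrow$.

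Second, for~(2): passing to the mirror image reverses every crossing, so it turns the $p$-box into a $(-p)$-box and the $q$-box into a $(-q)$-box. It then suffices to check that the mirror of the part of the diagram \emph{outside} the two twist boxes is ambient isotopic, rel the endpoints of each box, to the original tangle; granting this, $\overline{K(p,q)}$ is a diagram of $K(-p,-q)$ (or of $K(-q,-p)$, which by the previous paragraph is the same knot). This amphichirality of the fixed part is a finite check: one chases that tangle through a short sequence of Reidemeister moves, being careful that the isotopy restoring standard form introduces no spurious $\pm 1$ half-twists into the (now empty) boxes and respects their framings.

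The main obstacle is precisely this last piece of bookkeeping — verifying that the isotopy reassembling the mirrored fixed tangle is genuinely trivial on the boundary of the twist boxes, so that no extra half-twists leak in. This is the point at which one must be faithful to Kanenobu's specific diagram rather than to a schematic picture; once it is carried out, the two symmetries above together yield both claimed statements, and since neither argument ever invoked parity, the parity hypothesis needed by Kanenobu for~(1)$\Longrightarrow$ and~(3) is seen to be unnecessary for these two parts.
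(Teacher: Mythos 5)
The first thing to note is that the paper contains no proof of this statement: Theorem \ref{kanorig} is quoted from Kanenobu's paper \cite{Kan}, so there is no internal argument to compare yours against. Measured against the statement itself, your proposal has a genuine gap of scope: it addresses only the unnumbered addendum --- part (2) and the $\Longleftarrow$ direction of (1) without parity hypotheses --- and says nothing about the two claims carrying the real content. The $\Longrightarrow$ direction of (1) requires an invariant that actually distinguishes $K(p,q)$ from $K(r,s)$ when $\{p,q\} \neq \{r,s\}$; by (3) no polynomial invariant of the type in play can do this, and Kanenobu's argument (which is where the evenness of $p$ and $q$ enters) relies on finer invariants of the knot, not on any diagram symmetry. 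Claim (3) in turn requires a skein-theoretic computation showing that the HOMFLY polynomial depends only on $p+q$, typically by resolving crossings in the twist regions and checking that the resulting skein triples do not see how the twists are distributed between the two boxes. Neither of these is touched by the two symmetries you describe, so as a proof of the theorem as stated the proposal is incomplete.

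For the parts you do address, your route is the natural one and is indeed parity-free: a symmetry of the diagram exchanging the two twist boxes gives $K(p,q)=K(q,p)$, and mirroring (which negates both boxes) combined with amphichirality of the complementary tangle, composed if necessary with the first symmetry, gives $\overline{K(p,q)}=K(-p,-q)$. But you explicitly defer the one step that carries the content of (2): verifying, rel the endpoints of the boxes, that the mirrored fixed tangle is isotopic to the original with no residual half-twists absorbed into the (now empty) boxes. Since you name this as ``the main obstacle'' without carrying it out, even the addendum is a sketch rather than a proof. To close it, work with Kanenobu's actual diagram with $p=q=0$ (a connected sum of two figure-eight knots, each amphichiral, which is why the check is expected to succeed) and exhibit the isotopy keeping the four boxed endpoints fixed; this is short but unavoidable, and it is the only place where faithfulness to the specific figure matters.
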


These families of knots $K(p,q)$ with $p+q$ constant and $p,q$ even are therefore infinite families of knots with the same HOMFLY and hence the same specializations of HOMFLY (the same Alexander polynomials, Jones polynomials, $sl(n)$ polynomials \emph{et cetera}).

Given a knot, Khovanov and Rozansky have defined bigraded vector spaces which recover the $sl(n)$ polynomials of the knot as the graded Euler characteristic \cite{KR1} and also trigraded vector spaces which recover the HOMFLY polynomial as the bigraded Euler characteristic \cite{KR2}.  Jacob Rasmussen \cite{Ras2} has given spectral sequences which start from the HOMFLY homology of a knot and converge to the $sl(n)$ homology.  As a consequence of the existence of these spectral sequences, one can think of the HOMFLY homology as being the limit as $n \rightarrow \infty$ of the $sl(n)$ homologies.

It is natural to ask whether the HOMFLY homology or, more generally, the $sl(n)$ homologies can detect the difference between $K(p,q)$ and $K(r,s)$ when $p+q = r+s$ and $p,q,r,s$ are all even.  In this paper we show that they cannot detect this difference.  As a consequence, the Kanenobu knots provide the first examples of an infinite collection of knots with the same HOMFLY and $sl(n)$ homologies.

\begin{theorem}
\label{kanethm}
We write $H_n(K)$ for the $sl(n)$ homology of a knot $K$.  We may mean the unreduced, reduced, or the equivariant homology with potential $x^{n+1} - (n+1)ax$.  Then for the Kanenobu knots $K(p,q)$ we have

\[ {H}_n(K(p,q)) = {H}_n(K(r,s)) \, \, {\rm whenever} \, \, p+q = r+s \, \, {\rm and} \, \, pq \equiv rs \pmod{2}{\rm .} \]
\end{theorem}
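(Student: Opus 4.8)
The plan is to extract the relevant local structure of the Kanenobu knots and feed it into a structure theorem for twisting the ribbon of a one-ribbon ribbon knot (the result advertised in the abstract). First I would observe from Figure \ref{knotskpq} that $K(p,q)$ is obtained from a fixed ribbon knot with one ribbon by inserting $p$ half-twists in one band of the ribbon and $q$ half-twists in the other; passing through the point at infinity lets us see the two twist regions symmetrically. The key reduction is that changing $p \mapsto p+2$ and $q \mapsto q-2$ should be realizable by an isotopy that slides the twists around the ribbon, so that $H_n(K(p,q))$ depends only on $p+q$ up to the contribution of a single remaining half-twist; this is exactly where the parity condition $pq \equiv rs \pmod 2$ enters, since $p+q$ fixed forces $p,q$ to have the same parity as $r,s$ individually only up to swapping, and the leftover half-twist is either present or not according to $pq \bmod 2$.

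Next I would set up the Khovanov--Rozansky computation via the cube of resolutions localized at the twist regions. The twist regions are built from copies of the elementary $2$-braid generator, and the $sl(n)$ (and HOMFLY) complex of a long sequence of such generators stabilizes: the mapping cone description of a single crossing, iterated, gives a complex whose homotopy type past the first crossing is controlled by the idempotent (Jones--Wenzl-type) behavior of the relevant matrix factorizations. Concretely, I would show that the complex assigned to $k$ positive half-twists in a band, tensored into the rest of the diagram, is homotopy equivalent to the complex for $k-2$ half-twists up to an overall shift and a fixed auxiliary summand, using the $sl(n)$ skein long exact sequence (Rasmussen's, or the Khovanov--Rozansky mapping cone) together with the fact that resolving a crossing in the band either reconnects the ribbon trivially or produces a disjoint unknotted circle whose contribution is a known graded vector space. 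Iterating, the dependence collapses to $p+q$ plus the discrete $pq \bmod 2$ data.

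The main obstacle I expect is the bookkeeping of gradings and the precise form of the ``structure theorem'' so that the cancellation is genuinely an isomorphism of bigraded (or trigraded) vector spaces and not merely an equality of Euler characteristics — one must check that the spectral sequences or mapping cones degenerate in the required range, i.e. that the differentials between the stabilized part and the auxiliary summand vanish. This is a matter of controlling the maps in the skein exact triangle at the chain level; I would handle it by working with the reduced theory first (where the auxiliary summand is simplest), proving the isomorphism there, and then bootstrapping to the unreduced and equivariant cases using the module structure over the ground ring with potential $x^{n+1}-(n+1)ax$, noting that all the maps involved are module maps so the equivariant statement follows formally. Finally I would remark that since Rasmussen's spectral sequences from HOMFLY homology to $sl(n)$ homology are natural, the same twist-region argument applied at the HOMFLY level gives the HOMFLY-homology statement, completing the proof and yielding the claimed infinite family indistinguishable by all these invariants.
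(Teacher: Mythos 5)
Your opening reduction contains a false step: you assert that the change $(p,q)\mapsto(p+2,q-2)$ ``should be realizable by an isotopy that slides the twists around the ribbon.'' It cannot be: by Theorem \ref{kanorig}(1), for even parameters $K(p,q)$ and $K(r,s)$ are isotopic only when $\{p,q\}=\{r,s\}$, so $K(p,q)$ and $K(p+2,q-2)$ are in general distinct knots --- if such an isotopy existed, the theorem would be vacuous and the Kanenobu family would not be interesting. What is actually true, and what the paper uses, is purely homological: $K(p,q)$ is a ribbon knot with one ribbon in \emph{two} ways (adding a band at either twist region yields the two-component unlink --- note this is exactly the ribbon hypothesis, not an automatic feature of resolving a crossing in a band), and the structure theorem (Theorem \ref{ribbonthm}) says that adding two half-twists to the ribbon changes the homology only by a shift $[2]\{-2n\}$ on the complement of a fixed unknot summand. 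Applying this at the $p$-site and at the $q$-site makes the two shifts cancel, giving $H_n(K(p,q))=H_n(K(p+2,q-2))$, and iterating (using $K(p,q)=K(q,p)$ when $p+q$ is odd) yields the parity statement.

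More seriously, you correctly flag the main obstacle --- identifying the connecting maps in the skein long exact sequence so that the ``auxiliary summand'' genuinely splits off --- but your proposed handling supplies no mechanism and runs in the wrong direction. The paper's mechanism is the equivariant theory itself: by Krasner's simplification (Proposition \ref{basicprop}) and Gaussian elimination, the cone of the crossing-change map is two shifted copies of an unlink complex, so the unknown term $N$ in the long exact sequence of Proposition \ref{les} is two shifted copies of $H_n(U)$; then, since each $K_p$ is smoothly slice, $s_n(K_p)=0$, so Proposition \ref{equivtrick} pins the free part of the equivariant homology to be exactly $H_n(U)$ in homological degree zero with no quantum shift, and degree and freeness considerations force the two boundary maps to be isomorphisms onto the free parts. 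Only afterwards does one specialize: setting $a=0$ gives the unreduced case, and the reduced case follows by universal coefficients. Your plan of proving the reduced case first and ``bootstrapping'' to the unreduced and equivariant cases has no such lever: in the reduced or standard unreduced theory there is no a priori distinguished summand whose homological and quantum position is controlled by sliceness, so nothing forces the exact triangle to degenerate, and there is no formal implication from reduced homology to equivariant homology (the implications go the other way). Without the equivariant/slice input, or some substitute identification of the maps in the triangle, the argument is incomplete.
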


\begin{corollary}
\label{HHHkane}
We write $\ol{H}(K)$ for the reduced HOMFLY homology of a knot $K$.  Then for the Kanenobu knots $K(p,q)$ we have

\[ \ol{H}(K(p,q)) = \ol{H}(K(r,s)) \, \, {\rm whenever} \, \, p+q = r+s \, \, {\rm and} \, \, pq \equiv rs \pmod{2}{\rm .} \]
\end{corollary}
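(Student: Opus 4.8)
The plan is to deduce the statement for reduced HOMFLY homology from the $n\to\infty$ behaviour of the $sl(n)$ homologies, combining the reduced case of Theorem~\ref{kanethm} with Rasmussen's spectral sequences~\cite{Ras2}. No new topology is needed: the work is in the reconstruction of the triply graded invariant from the collection of doubly graded ones.

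First I would recall the relevant structure. For any knot $K$ there is a spectral sequence with $E_1$-page the reduced HOMFLY homology $\ol{H}(K)$, differential $d_n$, converging to the reduced $sl(n)$ homology $H_n(K)$. The differential $d_n$ has a fixed tridegree in which the $q$-shift grows linearly in $n$, whereas $\ol{H}(K)$ is a finite-dimensional triply graded vector space; hence for all $n$ larger than some $N(K)$ the differential $d_n$, and all subsequent differentials, vanish for degree reasons, and $H_n(K)$ is recovered from $\ol{H}(K)$ by the evident collapse of two of its three gradings. At the level of Poincaré series this says
\[
\mathcal{Q}^{\,n}_K(Q,T) \;=\; \mathcal{P}_K(Q^{n},Q,T) \qquad (n \ge N(K)),
\]
where $\mathcal{P}_K(A,Q,T)$ and $\mathcal{Q}^{\,n}_K(Q,T)$ are the Poincaré series of $\ol{H}(K)$ and of $H_n(K)$, and $A\mapsto Q^{n}$ is the categorified form of the $sl(n)$ specialisation of the HOMFLY polynomial; the precise normalisation of the gradings is immaterial for what follows.

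Now fix Kanenobu knots with $p+q=r+s$ and $pq\equiv rs\pmod 2$. By the reduced case of Theorem~\ref{kanethm}, $H_n(K(p,q))=H_n(K(r,s))$, hence $\mathcal{Q}^{\,n}_{K(p,q)}=\mathcal{Q}^{\,n}_{K(r,s)}$, for every $n$. Put $f(A,Q,T)=\mathcal{P}_{K(p,q)}(A,Q,T)-\mathcal{P}_{K(r,s)}(A,Q,T)$, a Laurent polynomial, so that $f(Q^{n},Q,T)=0$ for all $n\ge\max\bigl(N(K(p,q)),N(K(r,s))\bigr)$. Writing $f=\sum c_{ijk}A^iQ^jT^k$, the vanishing of $f(Q^{n},Q,T)$ means $\sum_{ni+j=m}c_{ijk}=0$ for every $m,k$; but once $n$ exceeds the spread of the (finitely many) exponents $j$ occurring in $f$, the map $(i,j)\mapsto ni+j$ is injective on the bidegrees appearing in $f$, which forces every $c_{ijk}$ to vanish. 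Hence $f\equiv 0$, i.e.\ $\mathcal{P}_{K(p,q)}=\mathcal{P}_{K(r,s)}$, which is precisely the assertion $\ol{H}(K(p,q))=\ol{H}(K(r,s))$.

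The step that requires care is the input from \cite{Ras2}: one must check that the ``reduced $sl(n)$ homology'' of Theorem~\ref{kanethm} is exactly the one appearing as the target of Rasmussen's spectral sequence, and that the collapse of $d_n$ for $n\gg 0$ is available in the quantitative form used above — namely that for large $n$ the reduced $sl(n)$ homology recovers $\ol{H}(K)$ with the expected regrading, so that the displayed substitution identity holds. Everything else — the degree count killing $d_n$, and the elementary injectivity of $(i,j)\mapsto ni+j$ on a bounded set — is routine. (Alternatively, as the abstract indicates, one could obtain the corollary directly from the structure theorem proved in the next section applied to the HOMFLY chain complex; the argument above has the advantage of using only the already-stated Theorem~\ref{kanethm} as a black box.)
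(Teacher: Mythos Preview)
Your proposal is correct and takes essentially the same approach as the paper: the paper's proof is the one-line statement that the corollary ``follows immediately from Theorem~\ref{kanethm} and Rasmussen's Theorem~1 from \cite{Ras2} which realizes the reduced HOMFLY homology as the limit of the reduced $sl(n)$ homologies as $n \rightarrow \infty$.'' You have simply unpacked what ``limit'' means here --- the collapse of $d_n$ for $n\gg 0$ and the resulting regrading --- and supplied the elementary Laurent-polynomial argument that recovers the triply graded Poincar\'e series from its specializations $A\mapsto Q^n$; this is exactly the content of Rasmussen's stabilization result that the paper cites.
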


Liam Watson \cite{Wat} has an analogue of Theorem \ref{kanethm} for standard Khovanov homology over $\Z$, and Greene-Watson are working on analogues for odd Khovanov homology and for Heegaard-Floer knot homology.

\subsection{Ribbon knots with one ribbon}

The important point in our proof of Theorem \ref{kanethm} is that the knots $K(p,q)$ are ribbon knots with one ribbon.  If $K=K_0$ is a ribbon knot with one ribbon , then by twisting along the ribbon we obtain a sequence of knots $K_p$ for $p \in \Z$; this process is illustrated in Figure \ref{ribbon}.  For these knots we have the following result.

\begin{figure}
\centerline{
{
\psfrag{p}{$p$}
\psfrag{q}{$q$}
\psfrag{-}{$-$}
\psfrag{ldots}{$\ldots$}
\psfrag{T(D)}{$T(D)$}
\psfrag{T-(D)}{$T^-(D)$}
\psfrag{T+(D)}{$T^+(D)$}
\includegraphics[height=2in,width=3.5in]{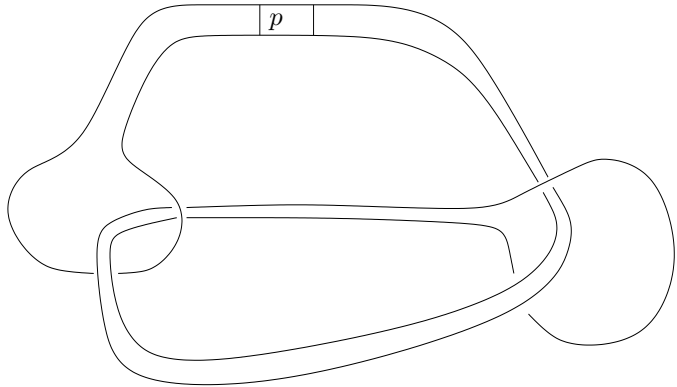}
}}
\caption{Here is an example of a class of ribbon knots $K_p$ with one ribbon.  On the ribbon we have inserted $|p|$ half-twists, positively or negatively depending on the sign on $p$.}
\label{ribbon}
\end{figure}

\begin{theorem}
\label{ribbonthm}
We write $H_n(K)$ for the $sl(n)$ homology of a knot $K$.  We may mean the unreduced, reduced, or the equivariant homology with potential $x^{n+1} - (n+1)ax$.  When working with equivariant homology every module in this statement should be read as a finitely-generated bigraded $\C[a]$-module, and otherwise as a finite dimensional bigraded $\C$-vector space.  Let $U$ be the unknot.  Then there exists a module $M_n(K_p)$ such that

\[{H}_n(K_p) = H_n(U) \oplus M_n(K_p) \]

\noindent and

\[ M_n(K_{p+2}) = M_n(K_p) [2] \{-2n\} \, \, {\rm for} \, \, {\rm all} \, \, p{\rm ,} \]

\noindent where the square brackets indicate a shift in the homological grading and the curly brackets indicate a shift in the quantum grading.
\end{theorem}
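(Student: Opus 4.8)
The plan is to establish the statement at the level of complexes and then pass to homology, running an induction on $p$ in steps of two; since the unreduced, reduced, and $\C[a]$-equivariant Khovanov--Rozansky theories all admit the relevant skein exact sequences, I would carry out the argument once, reading ``module'' in the appropriate sense, and verify $\C[a]$-linearity of every map at the end so that in the equivariant case the resulting decomposition is one of bigraded $\C[a]$-modules (where $H_n(U)$ is free of rank $n$, so the splitting has genuine content).

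\emph{Reduction to a local model.} Using that $K_0$ is a ribbon knot with \emph{one} ribbon, I would fix a diagram in which $K_p$ agrees with a diagram of $K_0$ away from a ball, inside which the two anti-parallel strands of the ribbon carry $p$ half-twists. The point of ``one ribbon'' is that these half-twists cannot be slid off the end of the band: they can always be slid along it, but are trapped in the segment threading through the single ribbon singularity (the place where the band passes through a capping disc). After isotopy, then, the only difference between $K_{p+2}$ and $K_p$ is one more full twist of the ribbon parked at that singularity, and that is the only place the argument needs to look.

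\emph{The staircase and its collapse.} Resolving a crossing of the twist region, and feeding this into the skein exact sequence, relates $H_n(K_p)$ to $H_n(K_{p-2})$ and to $H_n$ of the link obtained by cutting the ribbon band --- the two-component unlink $U\sqcup U$, with $H_n(U\sqcup U)=H_n(U)\otimes H_n(U)$. Iterating, $K_p$ is presented as the total complex of a staircase whose steps are copies of $C_n(U\sqcup U)$ capped off by $C_n(K_0)$ (or $C_n(K_1)$). The heart of the proof is that the connecting maps between consecutive steps are as non-degenerate as possible --- this is precisely what the ribbon singularity buys, since the class an added full twist would naively contribute is matched, through the band's pass-through of the capping disc, against a class on the neighbouring cut band. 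Hence consecutive steps cancel in pairs, the staircase collapses, and one is left with $C_n(K_0)$ together with a single residual cut-band factor; peeling off that factor as $H_n(U)$ and renaming the rest $M_n(K_p)$ gives $H_n(K_p)=H_n(U)\oplus M_n(K_p)$, while adding one more step shifts the residual part by $[2]\{-2n\}$, that is $M_n(K_{p+2})=M_n(K_p)[2]\{-2n\}$. As a sanity check, and to fix the shifts, I would first verify the decategorified shadow: the $sl(n)$ skein relation at the same crossings gives $P_n(K_{p+2})=q^{-2n}P_n(K_p)+(1-q^{-2n})\,P_n(U)$, which exhibits the quantum shift $\{-2n\}$ directly and, via Euler characteristics, the homological shift $[2]$.

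\emph{Bookkeeping and the main obstacle.} What remains is to chase the homological and quantum gradings carefully through the exact sequence so that the shift is pinned down to $[2]\{-2n\}$ exactly, and to confirm $\C[a]$-linearity throughout in the equivariant case. The step I expect to be the real obstacle is the collapse of the staircase: producing an honest isomorphism $M_n(K_{p+2})\cong M_n(K_p)[2]\{-2n\}$ rather than a mere match of graded dimensions. Rank counting cannot achieve this --- for odd $n$ a naive count in the exact sequence does not even balance numerically, since $\dim H_n(U\sqcup U)=n^2$ is odd --- so one must use the ribbon geometry to identify the connecting homomorphism concretely, for instance as a composite of saddle maps and dot maps whose kernel and cokernel are understood, and then check that this description survives passage from the unreduced to the reduced and to the $\C[a]$-equivariant theory with the stated potential.
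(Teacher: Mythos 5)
Your local model and skein/cone setup match the scaffolding of the paper's argument: one compares $C_n(K_p)$ with $C_n(K_{p+2})$ via a chain map whose cone is, up to shifts, the two-step complex of the cut-band unlink, so that $H_n(K_p)$ and $H_n(K_{p+2})[-2]\{2n\}$ sit in a long exact sequence with a module $N \cong H_n(U)[-1]\{0\} \oplus H_n(U)[-2]\{2n\}$. But the step you yourself flag as the obstacle --- that the connecting maps are ``as non-degenerate as possible,'' so the staircase collapses --- is precisely the content of the theorem, and your proposal gives no argument for it beyond an appeal to ``ribbon geometry'' and the hope of identifying the connecting homomorphism as a composite of saddle and dot maps. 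That is a genuine gap: nothing in the local picture of the twist region forces the boundary maps $\phi: H_n^0(K_{p+2})\{2n\} \to N^{-2}$ and $\psi: N^{-1} \to H_n^0(K_p)$ to be isomorphisms onto the relevant summands, and a direct computation of these maps from the diagram is exactly what one does not know how to do; moreover the iterated ``staircase'' is unnecessary, since a single comparison of $K_p$ with $K_{p+2}$ suffices for a statement that is already an induction in steps of two.

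The paper closes this gap structurally rather than geometrically, and this is the idea missing from your proposal: work first in the equivariant theory over $\C[a]$, where the free part of $H_n(K)$ has rank $n$, is concentrated in homological degree $0$, and sits in quantum degrees determined by the concordance invariant $s_n(K)$ (via Gornik's perturbation at $a=1$). Every $K_p$ is a ribbon knot, hence smoothly slice, so $s_n(K_p)=0$ for all $p$; since $N^{-1}$ and $N^{-2}$ are free and live in homological degrees $-1,-2$ while free parts of $H_n(K_p)$ and $H_n(K_{p+2})$ live only in degree $0$, exactness forces $\phi$ and $\psi$ to be isomorphisms onto the free parts, with quantum gradings matching because $s_n=0$. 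This splits off $H_n(U)$ and identifies the remaining modules up to the shift $[2]\{-2n\}$; the standard unreduced case then follows by setting $a=0$ and the reduced case by universal coefficients. Without some such input (sliceness plus the equivariant/Gornik structure theorem, or a Lee-type analogue), your collapse step cannot be completed, so as written the proposal does not prove the theorem.
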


\begin{corollary}
\label{HHHribbon}
We write $\ol{H}^{i,j,k}(K)$ for the reduced HOMFLY homology of a knot $K$, using the grading conventions of \cite{Ras2}.  We use square brackets to denote a shift in the $j$-grading, and curly brackets to denote a shift in the $k$-grading.  Let $U$ be the unknot.  Then there exists a trigraded $\C$-vector space $M(K_p)$ such that

\[\ol{H}(K_p) = \ol{H}(U) \oplus M(K_p) \]

\noindent and

\[ M(K_{p+2}) = M(K_p) [-2] \{2\} \, \, {\rm for} \, \, {\rm all} \, \, p{\rm .} \]

\end{corollary}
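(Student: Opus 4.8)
\emph{Proof proposal.} The plan is to deduce Corollary~\ref{HHHribbon} from Theorem~\ref{ribbonthm} together with Rasmussen's comparison between HOMFLY homology and $sl(n)$ homology \cite{Ras2}. Recall the input: for a fixed knot $K$ the reduced $sl(n)$ homology $H_n(K)$ is obtained from the reduced HOMFLY homology $\ol{H}(K)$ by a differential $d_n$ followed by a regrading coming from the substitution $a = q^n$, and that $d_n$ has a tridegree in which $n$ appears, so that for $n$ larger than a bound depending only on $K$ (essentially the diameter of the $a$-grading support of $\ol{H}(K)$) both $d_n$ and all higher differentials vanish. For such $n$ the bigraded vector space $H_n(K)$ is literally $\ol{H}(K)$ with the triple grading $(i,j,k)$ collapsed to the bigrading whose homological part is $k$ and whose quantum part is $i+nj$ — here I take $j$ to be the $a$-grading and $k$ the homological grading, the labelling for which the shifts in Theorem~\ref{ribbonthm} and in Corollary~\ref{HHHribbon} are mutually consistent. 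Since $\ol{H}(K)$ has finite support, for $n$ sufficiently large this collapse is injective on that support, so $\ol{H}(K)$ is recovered from $H_n(K)$ for any one such large $n$.

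First I would fix $p$ and choose a single $n$ large enough that the above applies simultaneously to $U$, $K_p$ and $K_{p+2}$. Applying Theorem~\ref{ribbonthm} at this $n$ gives $H_n(K_p) = H_n(U) \oplus M_n(K_p)$; comparing graded dimensions and using injectivity of the collapse we get $\dim \ol{H}^{i,j,k}(K_p) \ge \dim \ol{H}^{i,j,k}(U)$ in every tridegree. As $\ol{H}(U)$ is one-dimensional, concentrated in tridegree $(0,0,0)$, I may then define $M(K_p)$ to be the trigraded $\C$-vector space with $\dim M^{i,j,k}(K_p) = \dim \ol{H}^{i,j,k}(K_p) - \dim \ol{H}^{i,j,k}(U)$, so that $\ol{H}(K_p) = \ol{H}(U) \oplus M(K_p)$. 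Since $M_n(K_p)$ is just a graded complement of $H_n(U)$ in $H_n(K_p)$, it has graded dimension $\dim H_n(K_p) - \dim H_n(U)$, which is precisely the collapse of the graded dimension of $M(K_p)$; hence $M(K_p)$ collapses to $M_n(K_p)$.

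Next I would track the shift. Under the collapse $(i,j,k) \mapsto (\text{homological } k,\ \text{quantum } i+nj)$, the trigraded shift $[-2]\{2\}$ — lowering the $j$-grading by $2$ and raising the $k$-grading by $2$ — becomes the bigraded shift $[2]\{-2n\}$ on the $sl(n)$ side. Hence $M(K_p)[-2]\{2\}$ collapses to $M_n(K_p)[2]\{-2n\}$, which by Theorem~\ref{ribbonthm} equals $M_n(K_{p+2})$, which in turn is the collapse of $M(K_{p+2})$. Since for our chosen $n$ the collapse is injective on supports, and a finite-dimensional trigraded vector space is determined by its graded dimensions, we conclude $M(K_{p+2}) = M(K_p)[-2]\{2\}$. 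Letting $p$ range over $\Z$ gives the corollary.

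The main obstacle I anticipate is pinning down Rasmussen's regrading precisely enough to be certain that (a) for $n$ large the reduced $sl(n)$ homology really is the collapse of the reduced HOMFLY homology with \emph{no} surviving differential, and (b) the shift $[2]\{-2n\}$ of Theorem~\ref{ribbonthm} corresponds to exactly $[-2]\{2\}$ in the conventions of \cite{Ras2} and not to some neighbouring shift — this is finite bookkeeping, but it is precisely where a wrong sign or a missing factor of $n$ would hide. A secondary point that should be spelled out is why a direct-sum splitting of bigraded vector spaces at level $n \gg 0$ forces a tridegreewise splitting upstairs; this uses only that vector spaces are determined by their graded dimensions, together with the grading-injectivity of the collapse on the finite support of $\ol{H}$.
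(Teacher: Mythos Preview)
Your proposal is correct and follows essentially the same route as the paper: the paper's proof is a one-line invocation of Rasmussen's Theorem~1 from \cite{Ras2}, realizing reduced HOMFLY homology as the $n\to\infty$ limit of the reduced $sl(n)$ homologies, applied to Theorem~\ref{ribbonthm}. You have simply unpacked that citation into the explicit degeneration-and-collapse argument, and the bookkeeping concerns you flag about the regrading are precisely what the paper leaves implicit in the phrase ``follow immediately.''
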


\begin{remark}
In \cite{L2}, we indicated how the class of objects with well-defined Khovanov-Rozansky homologies could be enlarged to include knots with \emph{infinite twist sites}, which are sites where we add an infinite number of twists to two oppositely-oriented strands.  In the case of the ribbon knots $K_p$ considered in this subsection, Theorem \ref{ribbonthm} and Corollary \ref{HHHribbon} imply that the Khovanov-Rozansky homologies of $K_\infty$ are the same as those of the unknot.
\end{remark}

\subsection{Acknowledgements}
We thank Liam Watson for suggesting this problem and we thank Josh Greene for suggesting this problem later when we were better able to answer it.

\section{Proofs}
We organize this section somewhat in reverse, ending with our proof of Theorem~\ref{ribbonthm}.

\begin{proof}[Proof of Corollaries \ref{HHHkane} and \ref{HHHribbon}]
These Corollaries follow immediately from Theorems \ref{kanethm} and \ref{ribbonthm} and Rasmussen's Theorem $1$ from \cite{Ras2} which realizes the reduced HOMFLY homology as the limit of the reduced $sl(n)$ homologies as $n \rightarrow \infty$.
\end{proof}

\begin{proof}[Proof of Theorem \ref{kanethm} assuming Theorem \ref{ribbonthm}]
There are two places in Figure \ref{knotskpq} where adding a ribbon to the knot $K(p,q)$ will result in the $2$-component unlink: we could add a ribbon at one side of the $p$-twist region or at one side of the $q$-twist region.  Hence we observe that $K(p,q)$ is a ribbon knot with one ribbon in two possibly distinct ways.  This puts us in the situation of Theorem \ref{ribbonthm}.

Applying Theorem \ref{ribbonthm} at the two different sites tells us that

\[H_n(K(p,q)) = H_n(K(p+2, q-2)) \]

\noindent for all $p,q$.

Iterating this (and when $p+q$ is odd using the fact that $K(p,q) = K(q,p)$) we obtain the statement of Theorem \ref{kanethm}.
\end{proof}

Before we begin the proof of Theorem \ref{ribbonthm} we collect some results, many of which appeared in \cite{L2}.

\begin{figure}
\centerline{
{
\psfrag{K0}{$D_0$}
\psfrag{K-1}{$D_1$}
\psfrag{x1}{$x_1$}
\psfrag{x2}{$x_2$}
\psfrag{x3}{$x_3$}
\psfrag{x4}{$x_4$}
\psfrag{T+(D)}{$T^+(D)$}
\includegraphics[height=1.2in,width=5in]{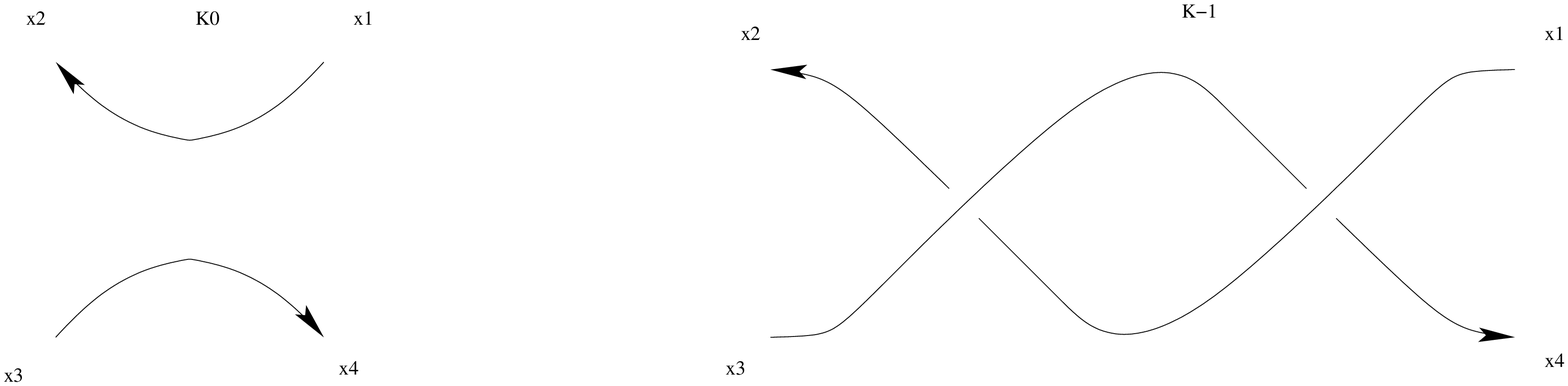}
}}
\caption{We have drawn two tangle diagrams, $D_0$ and $D_1$.  Associated in the $sl(n)$ Khovanov-Rozansky theory to each tangle diagrams is a complex of (vectors of) matrix factorizations.  We shall denote these up to chain homotopy equivalence by $C_n(D_0)$ and $C_n(D_1)$.}
\label{crossdiag}
\end{figure}

\begin{proposition}
\label{equivtrick}
Let $H_n(K)$ stand for the equivariant $sl(n)$ knot homology of $K$ with potential $x^{n+1} - (n+1)ax$ over the ring $\C[a]$.  Here $a$ is graded of degree $2n$ and $x$ of degree $2$ so that the potential is of homogeneous degree $2n+2$.

Then $H_n(K)$ has the structure of a $\C[a]$-module and the free part $F$ of $H_n(K)$ is of dimension $n$, supported entirely in homological degree $0$ and we have

\[F = H_n(U)\{s_n(K)\} {\rm ,}\]

\noindent where we write $U$ for the unknot, $s_n(K)$ for the analogue of the Lee-Rasmussen invariant $s(K)$ coming from Khovanov-Rozansky homology, and the curly brackets mean a shift in quantum degree.
\end{proposition}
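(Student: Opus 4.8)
The plan is to adapt the Lee--Gornik deformation argument to the equivariant setting. First I would note that, by construction, $H_n(K)$ is a finitely generated bigraded module over the principal ideal domain $\C[a]$, so it splits canonically as $H_n(K) = F \oplus T$ with $F$ free and $T$ the torsion submodule, and the free part is recovered by the base change $F \cong H_n(K)\otimes_{\C[a]}\C(a)$ since $T\otimes_{\C[a]}\C(a) = 0$. Hence it suffices to compute $H_n(K)\otimes_{\C[a]}\C(a)$ as a bigraded vector space together with its module structure, while keeping track of homological degrees.

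The key step is this localized computation, which I would carry out after the further (flat, hence homology-preserving) base change to an algebraic closure $\overline{\C(a)}$. Over $\overline{\C(a)}$ the element $a$ is a nonzero scalar; the derivative of the potential is $(n+1)(x^n - a)$, whose $n$ roots — the $n$-th roots of $a$ — are pairwise distinct because we are in characteristic zero. This puts us in the hypothesis of Gornik's theorem for $sl(n)$: for a knot $K$ the resulting deformed homology is isomorphic to $\overline{\C(a)}^{\oplus n}$, concentrated entirely in homological degree $0$, with the $n$ basis classes being eigenvectors of the basepoint action of $x$ with the $n$ distinct eigenvalues. Descending back to $\C(a)$, we conclude that $F$ has rank $n$ over $\C[a]$ and that $H_n^i(K)$ is torsion for every $i\neq 0$; therefore $F = F^0$ is supported in homological degree $0$.

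It then remains to identify $F$ with a single quantum shift of $H_n(U)$. Here I would use the standard fact that $H_n(K)$ is a module over $H_n(U) = \C[a][x]/(x^n-a)$, write $R$ for this ring, observe that $T$ is an $R$-submodule (because $R$ is a domain), and deduce that $F \cong H_n(K)/T$ is a finitely generated $R$-module. The substitution $a \mapsto x^n$ identifies $R$ with the graded polynomial ring $\C[x]$, $\deg x = 2$, which is a graded principal ideal domain; a short norm argument — multiply an $R$-annihilator of a class by its conjugates under $x \mapsto \omega x$, $\omega^n = 1$, to land in $\C[a]$ — shows that any $R$-module which is torsion-free over $\C[a]$ is torsion-free over $R$. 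Applying this to $F$ and comparing ranks (rank $n$ over $\C[a]$, hence rank $1$ over $R$) shows $F$ is free of rank one over $R$, and a rank-one free graded module over $\C[x]$ is $H_n(U)\{s\}$ for a unique integer $s$. One then \emph{defines} $s_n(K) := s$; comparing with the description of $s_n$ via the quantum filtration level of the surviving Gornik classes shows that this agrees with the announced Lee--Rasmussen-type invariant.

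The step I expect to be the main obstacle is the base change in the second paragraph: one must check that forming the equivariant Khovanov--Rozansky complex of matrix factorizations commutes with the extension $\C[a]\hookrightarrow\overline{\C(a)}$, and that Gornik's theorem — originally established over $\C$ with a numerical potential — applies over the field $\overline{\C(a)}$ as soon as the linearized potential has $n$ distinct roots. Granting that input, the remaining arguments are routine homological and commutative algebra over principal ideal domains.
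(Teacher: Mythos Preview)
Your proposal is correct, but it diverges from the paper's argument in both halves. For the rank and homological support of $F$, the paper does not localize: it simply sets $a=1$. Since the equivariant chain complex is built from free $\C[a]$-modules and the homology is graded (so the torsion is $a$-torsion and $a-1$ acts injectively), the specialization $a\mapsto 1$ kills exactly the torsion and returns a $\C$-vector space whose dimension is the rank of $F$; but the specialized theory is \emph{literally} Gornik's perturbation over $\C$, so his result can be quoted verbatim. This completely sidesteps the base-change issue you flagged as the main obstacle --- there is no need to rerun Gornik's proof over $\overline{\C(a)}$. For the quantum grading, the paper again takes the short route and cites an external reference (the author's own note on Gornik's perturbation) pinning down the filtration levels of the $n$ surviving classes in terms of a single even integer $s_n(K)$; lifted back, these are the generating degrees of $F$ as a free graded $\C[a]$-module. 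Your argument via the $H_n(U)\cong\C[x]$-module structure and the norm trick is, by contrast, self-contained: it explains structurally why $F$ must be a single shifted copy of $H_n(U)$ rather than importing that conclusion. In short, the paper's proof is terser and leans on existing literature, while yours trades a heavier (though routine) base-change verification for an intrinsic identification of the quantum degrees.
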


\begin{proof}
Setting $a=1$ we recover Gornik's perturbation of Khovanov-Rozansky homology \cite{G}, which he showed was of dimension $n$ and supported in homological degree $0$.  This implies that the free part of $H_n(K)$ must be supported in homological degree $0$ and be of dimension $n$, since the free part is the only part that survives under setting $a=1$.  In \cite{L3} we have shown the dependence of the quantum degrees of Gornik's perturbation on a single even integer $s_n(K)$.  The result follows.
\end{proof}

Next consider the complexes $C_n(D_0)$ and $C_n(D_1)$ of matrix factorizations given in Figure \ref{crossdiag}.  Krasner has given a representative of the chain homotopy equivalence class of $C_n(D_1)$ which has a particularly simple form, the chain complex being made up of maps between matrix factorizations $V$ and $Z$ as in Figure \ref{VandZ}.

\begin{proposition}[Krasner \cite{Kras1}]
\label{basicprop}
Up to chain homotopy equivalence

\[C_n(D_1) = V[0]\{1-n\} \stackrel{x_2 - x_4}{\rightarrow} V[1]\{-1-n\} \stackrel{S}{\rightarrow} Z[2]\{-2n\} {\rm ,} \]

\noindent where square brackets indicate a shift in homological degree, curly brackets indicate a shift in quantum degree, and $S$ is the map induced by saddle cobordism.
\end{proposition}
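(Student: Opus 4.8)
The proof is due to Krasner, and I would recover it essentially by unwinding the Khovanov--Rozansky construction and then simplifying. The plan is to start from the raw complex $C_n(D_1)$ as defined in \cite{KR1}: resolving the crossings of $D_1$ produces a cube whose vertices are the matrix factorizations associated to the elementary webs obtained by smoothing, and whose edges are the elementary web maps. One then simplifies this complex \emph{up to chain homotopy equivalence} by repeated Gaussian elimination, i.e.\ by the cancellation lemma which lets one delete a subcomplex $(A \xrightarrow{\sim} B)$ whose internal map is an isomorphism $\phi$, at the cost of correcting the neighbouring components of the differential by the usual zig-zag term $-\beta\phi^{-1}\alpha$.

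The first step is to reduce each vertex of the cube to a direct sum of (vectors of) shifted copies of $V$ and $Z$, using the standard matrix-factorization isomorphisms of \cite{KR1} --- removal of circles and of digons --- which, together with the normalisation shifts built into the complex of a single crossing, account for the quantum shifts $\{1-n\}$, $\{-1-n\}$, $\{-2n\}$ appearing in the statement. The relevant maps between $V$ and $Z$ are the ``zip'' $\chi_0\colon V\to Z$ and ``unzip'' $\chi_1\colon Z\to V$, which satisfy $\chi_1\circ\chi_0 = \pm(x_i-x_j)\,\mathrm{id}$ for the appropriate boundary variables. In the rewritten complex several edge maps are now isomorphisms, and I would cancel them one at a time.

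After the cancellations, the grading data and the positions of the surviving copies of $V$ and $Z$ force the shape of the answer to be $V[0]\{1-n\}\to V[1]\{-1-n\}\to Z[2]\{-2n\}$, so what is really left is to identify the two surviving differentials. The second is read off directly as a scalar multiple of the saddle-cobordism map $V\to Z$. The first is a map $V\to V$ of quantum degree $2$, hence given by multiplication by an element of quantum degree $2$; one computes it as a composite (zip) $\to$ (inverse of a cancelled isomorphism) $\to$ (unzip), and using $\chi_1\circ\chi_0 = \pm(x_i-x_j)\,\mathrm{id}$ this composite works out to multiplication by $x_2-x_4$.

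The main obstacle is precisely this last identification. Gradings alone do not pin down the first differential: multiplication by $x_2-x_4$, by $x_1-x_3$, or even the zero map would all be consistent with the shifts, so one has to keep honest track, through every Gaussian elimination, of which boundary variable is attached to which strand and of the sign conventions in $\chi_1\circ\chi_0$, in order to land on exactly the asserted map. Once Krasner's presentation in \cite{Kras1} and the elementary isomorphisms of \cite{KR1} are granted, the rest is routine bookkeeping.
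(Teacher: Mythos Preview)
The paper does not give its own proof of this proposition: it is stated with attribution to Krasner \cite{Kras1} and simply cited, with the additional remark that the same statement holds equivariantly by the argument in \cite{L2}. So there is nothing in the paper to compare your argument against beyond the citation itself.

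That said, your sketch is a faithful outline of how Krasner's proof actually proceeds: write down the cube of resolutions for the two-crossing tangle $D_1$, simplify each vertex to sums of shifted copies of $V$ and $Z$ via the standard circle- and digon-removal isomorphisms of \cite{KR1}, and then perform Gaussian elimination on the resulting complex until only three terms survive. Your identification of the residual maps --- the saddle $S$ and the multiplication by $x_2-x_4$ coming from the relation $\chi_1\chi_0 = \pm(x_i-x_j)$ --- is exactly the content of Krasner's computation, and you are right that the delicate point is the bookkeeping needed to land on $x_2-x_4$ rather than some other degree-$2$ endomorphism. Nothing in your outline is wrong; it is just that in this paper the result is imported rather than reproved.
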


We argued in \cite{L2} that this theorem holds even equivariantly, a flavour of Khovanov-Rozansky homology that did not exist when Krasner first formulated Proposition \ref{basicprop}.  Since the matrix factorization $Z$ is equal, up to degree shifts, to the only matrix factorization appearing in the complex $C(D_0)$, there is an obvious chain map induced by the identity map on $Z$:

\[ G : C_n(D_0) \rightarrow C_n(D_1)[-2]\{2n\} {\rm .} \]

\noindent We have shifted $C_n(D_1)$ here so that $G$ is graded of degree $0$ both in the homological and in the quantum gradings.

\begin{proposition}
\label{cone}
The cone of the chain map $G$, working either equivariantly or over $\C$, is the chain complex

\[Co(G) =  V[-2]\{1+n\} \stackrel{x_2 - x_4}{\rightarrow} V[-1]\{-1+n\} {\rm .} \]
\end{proposition}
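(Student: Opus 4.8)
The plan is to compute $Co(G)$ directly from Krasner's presentation of $C_n(D_1)$ and then to simplify it by a single Gaussian elimination. The whole argument is formal, so it applies verbatim in the equivariant theory over $\C[a]$ and in the theory over $\C$; the only input special to either setting is Proposition \ref{basicprop}, which, as noted immediately after its statement, holds equivariantly by \cite{L2}.

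First I would apply the shift $[-2]\{2n\}$ to the complex of Proposition \ref{basicprop}, obtaining
\[ C_n(D_1)[-2]\{2n\} = V[-2]\{1+n\} \stackrel{x_2-x_4}{\rightarrow} V[-1]\{-1+n\} \stackrel{S}{\rightarrow} Z[0]\{0\}{\rm .} \]
By construction $C_n(D_0)$ is a one-term complex which, after the normalising shift that makes $G$ bidegree $0$, is just $Z[0]\{0\}$, and $G$ is the chain map whose single nonzero component is $\id_Z$ into the terminal summand $Z[0]\{0\}$ of $C_n(D_1)[-2]\{2n\}$. That $G$ is a chain map is automatic, since $C_n(D_0)$ has zero differential and $Z[0]\{0\}$ is a terminal term.

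Next I would write down $Co(G)$ from the mapping-cone formula: its underlying object consists of the three terms of $C_n(D_1)[-2]\{2n\}$ together with one extra copy of $Z$, coming from $C_n(D_0)$ and placed in homological degree $-1$, and its differential is assembled from $x_2-x_4$, from $S$, from the zero differential of $C_n(D_0)$, and from $G$. In particular the only nonzero differential out of the extra copy of $Z$ is the $\pm\id_Z$ component landing in the copy of $Z$ coming from $C_n(D_1)[-2]\{2n\}$.

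Finally I would cancel these two copies of $Z$ by Gaussian elimination. The extra copy of $Z$ receives no differential at all, because $C_n(D_0)$ is concentrated in a single homological degree, and it maps to nothing but the other copy of $Z$, because that copy is the only object of $Co(G)$ in homological degree $0$; hence the Gaussian elimination introduces no correction term and, in particular, the precise form of the saddle map $S$ never enters. What remains is exactly $V[-2]\{1+n\} \stackrel{x_2-x_4}{\rightarrow} V[-1]\{-1+n\}$, as claimed. The only point requiring care is the routine bookkeeping of the homological and quantum shifts, together with the observation just made that no correction term arises in the elimination; I do not expect a genuine obstacle here, since the real content --- Krasner's model for $C_n(D_1)$, and its equivariant refinement --- has already been imported from \cite{Kras1} and \cite{L2}.
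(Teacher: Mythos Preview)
Your argument is correct and is exactly the ``straightforward application of Gaussian elimination'' that the paper invokes as its entire proof; you have simply spelled out the cone explicitly and verified that the single elimination of the two $Z$ summands introduces no correction term. There is nothing to add.
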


\begin{proof}
This is a straightforward application of Gaussian elimination.
\end{proof}

The map $G$ appears in \cite{L2} as the map $G_{0,1}$.  In that paper, maps on knot homologies induced by maps such as $G$ were fitted together into large commutative diagrams with exact sequences for the rows.  For our application in the current paper we only need one of the rows, not the whole commutative diagram.

\begin{figure}
\centerline{
{
\psfrag{V}{$V$}
\psfrag{Z}{$Z$}
\psfrag{x1}{$x_1$}
\psfrag{x2}{$x_2$}
\psfrag{x3}{$x_3$}
\psfrag{x4}{$x_4$}
\psfrag{T+(D)}{$T^+(D)$}
\includegraphics[height=1.5in,width=3.5in]{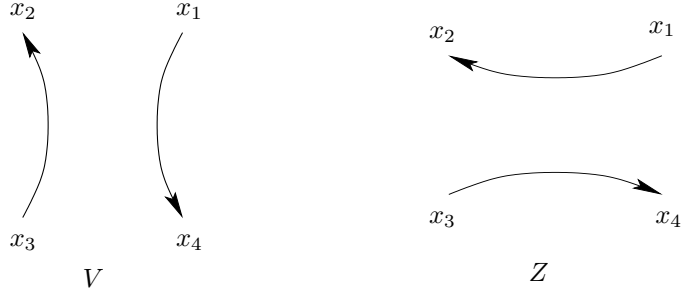}
}}
\caption{We have drawn here the matrix factorizations $V$ and $Z$.}
\label{VandZ}
\end{figure}

\begin{proposition}
\label{les}
Consider the knots $K_p$ as in the statement of Theorem \ref{ribbonthm}.  Then there is a long exact sequence in homology

\begin{eqnarray*}
\cdots &\rightarrow& N^{-1} \rightarrow H_n^0(K_p) \rightarrow H_n^2(K_{p+2})\{2n\} \rightarrow \\
&\rightarrow& N^0 \rightarrow  H_n^1(K_p) \rightarrow H_n^3(K_{p+2})\{2n\} \rightarrow \cdots
\end{eqnarray*}

\noindent where $N$ is a bigraded module given by

\[ N = H_n(U)[-1]\{0\} \oplus H_n(U)[-2]\{2n\} {\rm ,}\]

\noindent and we use superscripts to denote homological degree and all maps are of quantum-degree $0$.

This long exact sequence exists for equivariant, unreduced, or reduced flavours of homology.  In the first case, all modules are $\C[a]$-modules, in the other cases all modules are $\C$-vector spaces.
\end{proposition}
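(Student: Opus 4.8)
The plan is to splice the chain map $G$ of Proposition~\ref{cone} into the ribbon diagram and read off the desired sequence as the long exact sequence of the resulting mapping cone.

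First I would choose a diagram of $K_p$ in which the twist site on the ribbon sits inside a disk and the remainder is a fixed tangle; write $T$ for the corresponding fixed complex of matrix factorizations. Adding a full twist to the ribbon replaces the contents of the disk in the way modelled by the pair $D_0, D_1$ of Figure~\ref{crossdiag}, so that, up to overall homological and quantum shifts which I would fix once and for all and then suppress, $C_n(K_p) = T\otimes C_n(D_0)$ and $C_n(K_{p+2}) = T\otimes C_n(D_1)$. Tensoring $G$ with $T$ produces a chain map
\[
\widetilde G\colon C_n(K_p)\longrightarrow C_n(K_{p+2})[-2]\{2n\}{\rm ,}
\]
and the mapping cone triangle $C_n(K_p)\to C_n(K_{p+2})[-2]\{2n\}\to Co(\widetilde G)\to$ yields a long exact sequence of exactly the stated shape, with $N$ the homology of $Co(\widetilde G)$. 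So the task reduces to computing that homology.

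Since forming a cone commutes with tensoring by the fixed complex $T$, we have $Co(\widetilde G) = T\otimes Co(G)$, and by Proposition~\ref{cone} this is the two-term complex
\[
(T\otimes V)[-2]\{1+n\}\;\xrightarrow{\ x_2-x_4\ }\;(T\otimes V)[-1]\{-1+n\}{\rm .}
\]
Here $T\otimes V$ is the complex of the diagram obtained by filling the twist disk with the identity two-strand tangle; because $K_p$ is a ribbon knot with a single ribbon -- so that the two edges of the ribbon turn back into one another right next to the twist disk -- this diagram reduces, under Reidemeister and MOY moves, to the unknot, so $T\otimes V$ is, up to a grading shift, the complex of $U$. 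Under this identification the differential $x_2-x_4$ becomes the difference of two marked-point actions on the unknot, each of which is multiplication by the single generator of $H_n(U)$, so it is zero on homology. Hence $H_n(Co(\widetilde G))$ splits as the sum of the homologies of its two terms, each a grading shift of $H_n(U)$ and placed in homological degrees differing by one; collecting the shifts of Proposition~\ref{cone} with the one produced by the reduction of $T\otimes V$ gives $N = H_n(U)[-1]\{0\}\oplus H_n(U)[-2]\{2n\}$.

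Finally I would note that nothing above depends on the flavour of the homology: Proposition~\ref{basicprop} holds equivariantly by \cite{L2}, Proposition~\ref{cone} is a formal Gaussian elimination, and both the reduction of $T\otimes V$ to the unknot and the vanishing of $x_2-x_4$ on its homology are insensitive to whether we work equivariantly over $\C[a]$, unreducedly, or reducedly; so the sequence exists in all three cases with the stated coefficients. The step I expect to be the real obstacle is precisely this bookkeeping: verifying that the filled-in diagram $T\otimes V$ really is the unknot, determining how $x_2-x_4$ acts after that identification, and tracking every homological and quantum shift so that $N$ and the degree shifts in the sequence come out exactly -- and not merely up to an overall shift -- as claimed.
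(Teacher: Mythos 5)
Your overall architecture is the same as the paper's: realize $C_n(K_p)$ and $C_n(K_{p+2})$ as the rest of the diagram tensored with $C_n(D_0)$ and $C_n(D_1)$, take the cone of the induced map $\widetilde G$, use Proposition \ref{cone} to write that cone as a two-term complex built from $T\otimes V$, and read off the long exact sequence. The gap is in your identification of $T\otimes V$. The matrix factorization associated to the identity two-strand tangle is $Z$, not $V$ (that is precisely why $C_n(D_0)$ consists of a single copy of $Z$, and why the map $G$ exists); filling the twist disk with the identity tangle just returns $K_p$ itself. The factorization $V$ is the other resolution, and gluing it into the remainder of the ribbon diagram performs the band move along the ribbon. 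This is exactly where the hypothesis that $K_p$ is a ribbon knot with one ribbon enters: the resulting diagram is the \emph{$2$-component unlink}, not the unknot. Hence the homology of each term of the cone is, up to shifts, $H_n(U\cup U)=\C[x,y]/(x^n=y^n=0)$ (or its equivariant analogue), of dimension $n^2$, and the marked points $x_2,x_4$ lie on \emph{different} components, so the induced map on homology is multiplication by $x-y$, which is not zero. The proposition is obtained by computing the kernel and cokernel of $x-y$, each of which is $n$-dimensional and isomorphic to a shifted copy of $H_n(U)$.

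Your vanishing-differential shortcut does not merely skip this computation; it produces the wrong graded answer. In your version the two summands of $N$ would differ by a quantum shift of $2$ (coming from $\{1+n\}$ versus $\{-1+n\}$), whereas in the statement they differ by $2n$. The missing $2n-2$ is the quantum degree of the generator $\sum_{i+j=n-1}x^iy^j$ of $\ker(x-y)$ inside $\C[x,y]/(x^n=y^n=0)$, which is exactly what the kernel/cokernel computation supplies. So the step to repair is the claim that the filled-in diagram is the unknot and that $x_2-x_4$ acts by zero; replace it with the identification of $T\otimes V$ as the unlink obtained from the ribbon move and the explicit computation of $\ker$ and $\mathrm{coker}$ of $x-y$, and the rest of your argument (including the remarks on the equivariant, unreduced, and reduced flavours) goes through as in the paper.
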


We note that this long exact sequence appears as the top line of the commutative diagram in Proposition $2.7$ of \cite{L2}.  

\begin{proof}
Since the knots $K_p$ and $K_{p+2}$ differ locally by replacing an occurrence of the tangle $D_0$ by the tangle $D_1$, the chain map $G$ induces a chain map

\[ G: C_n(K_p) \rightarrow C_n(K_{p+2})[-2]\{2n\} {\rm ,} \]

\noindent where we have written $C_n$ to denote the $sl(n)$ Khovanov-Rozansky chain complex.  If we let $\tilde{N}$ be the cone of this chain map then setting $N$ to be the homology of $\tilde{N}$ gives us the desired long exact sequence.  It remains to identify the module structure of $N$.

Note that Proposition \ref{cone} realizes $\tilde{N}$ as the cone of an explicit map between two chain complexes, each associated (up to some degree shifts) to a diagram of the $2$-component unlink.

Taking account of these degree shifts we see that $N$ is supported in homological degrees $-2$ and $-1$ and sits in a long exact sequence whose support is

\[ 0 \rightarrow N^{-2} \rightarrow H_n(U \cup U)\{1+n\} \stackrel{x-y}{\rightarrow} H_n(U \cup U)\{-1+n\} \rightarrow N^{-1} \rightarrow 0 {\rm .} \]

\noindent Here we are writing the homology of the $2$-component unlink as

\[ H_n(U \cup U) = \C[x,y]/(x^n = y^n = 0) \{2-2n\} \]

\noindent in the standard case and as

\[ H_n(U \cup U) = \C[a,x,y]/(x^n = y^n = a) \{2-2n\} \]

\noindent in the equivariant case.

Computing the kernel and cokernel of the map $x-y$ determines $N$ as in the statement of the Proposition.
\end{proof}

We are now ready to prove Theorem \ref{ribbonthm}.

\begin{proof}[Proof of Theorem \ref{ribbonthm}]
Proposition \ref{les} gives a long exact sequence relating the $sl(n)$ homologies of $K_p$ and $K_{p+2}$ with the bigraded module $N$, which is supported in only homological degrees $-2$ and $-1$ and whose structure has been explicitly computed.  If we can compute the maps in the long exact sequence

\[\phi: H_n^0(K_{p+2})\{2n\} \rightarrow N^{-2} \]

\noindent and

\[ \psi: N^{-1} \rightarrow H_n^0(K_p) \]

\noindent then we shall be able to describe $H_n(K_{p+2})$ completely in terms of $H_n(K_p)$.

Let us start by considering the equivariant case.  Since $K_p$ and $K_{p+2}$ are both smoothly slice, we have $s_n(K_p) = s_n(K_{p+2}) = 0$.  So Proposition \ref{equivtrick} tells us everything about the free parts of $H_n(K_p)$ and $H_n(K_{p+2})$.

The module $N^{-2}$ is a free $\C[a]$-module.  If the map $\phi$ is not an injection on the free part of $H_n^0(K_{p+2})\{2n\}$ then there must be some non-zero free part of $H^{-1}(K_p)$, but this cannot happen by Proposition \ref{equivtrick}.  Since $\phi$ preserves the quantum grading, we see that $\phi$ is therefore an isomorphism on the free part of $H_n^0(K_{p+2})\{2n\}$.  Similarly, we see that $\psi$ must map $N^{-1}$ isomorphically onto the free part of $H_n^0(K_p)$.

This establishes Theorem \ref{ribbonthm} in the equivariant case.

Specializing the equivariant case to $a=0$ we obtain the unreduced standard $sl(n)$ homology.  It is clear that $\phi$ descends to a surjective map when we specialize and that $\psi$ descends to an injective map.  This establishes the unreduced case.  Finally the reduced case follows from the unreduced case and the generalized universal coefficient theorem for principal ideal domains.
\end{proof}

\section{Extension of results}

\begin{figure}
\centerline{
{
\psfrag{V}{$V$}
\psfrag{Z}{$Z$}
\psfrag{x1}{$x_1$}
\psfrag{x2}{$x_2$}
\psfrag{x3}{$x_3$}
\psfrag{x4}{$x_4$}
\psfrag{T+(D)}{$T^+(D)$}
\includegraphics[height=1.5in,width=1.5in]{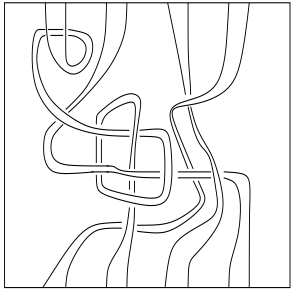}
}}
\caption{Here we show an example of a \emph{ribbon tangle} on $4$ ribbons.}
\label{ribbontanglething}
\end{figure}

The salient point in our proof of Theorem \ref{kanethm} was that the Kanenobu knots are ribbon knots on one ribbon in two different ways.  There are many ways to generate families of knots in which one can hope to retain this property.  As an example, we present one such way.

\begin{definition}
In Figure \ref{ribbontanglething} we draw a \emph{ribbon tangle} on $4$ ribbons.  A ribbon tangle on $n$ ribbons is a tangle with $n$ inputs at the bottom and $n$ outputs at the top, where we replace each tangle strand by two strands using the blackboard framing.
\end{definition}

We draw a knot $K_T(p_1,p_2,\ldots,p_n)$ in Figure \ref{kanenobugeneralized} depending on $n$ integers $p_i \in \Z$, and a ribbon tangle $T$.  By adding a ribbon to any of the twist regions we obtain the $2$-component unlink.  Hence we can apply Theorem \ref{ribbonthm} in this situation and so obtain the following result.

\begin{theorem}
Let $H$ be either reduced HOMFLY homology or $sl(n)$ homology (reduced, unreduced, or equivariant with potential $x^{n+1} - (n+1)ax$) and $T$ be any ribbon tangle.  Then we have that

\[ H(K_T(p_1,p_2, \ldots ,p_n)) = H(K_T(q_1,q_2,\ldots ,q_n))\]

\noindent whenever

\[ p_1 + p_2 + \cdots + p_n = q_1 + q_2 + \cdots + q_n \, \, {\rm and} \, \, p_i \equiv q_i \pmod{2} \, \, {\rm for} \, \, {\rm all} \, \, i {\rm .}\]
\qed
\end{theorem}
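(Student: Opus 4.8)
The plan is to reduce this multi-ribbon statement to repeated applications of Theorem \ref{ribbonthm}, exactly as was done for the Kanenobu knots in the proof of Theorem \ref{kanethm}. First I would observe that for each index $i$, the knot $K_T(p_1,\ldots,p_n)$ is a ribbon knot with one ribbon: adding a ribbon across the $i$-th twist region turns it into the $2$-component unlink, by the definition of a ribbon tangle (the blackboard-framed doubling means each twist region is a coherently-oriented two-strand region, and capping it off with a band unknots the whole picture). So for each fixed $i$ we are in the situation of Theorem \ref{ribbonthm}, with the role of the integer $p$ there played by $p_i$ here, all other $p_j$ held constant.

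Next, I would apply Theorem \ref{ribbonthm} at the $i$-th site together with a second site — say the $j$-th site — to get, just as in the Kanenobu proof, that the $sl(n)$ homology is unchanged when $(p_i,p_j) \mapsto (p_i+2, p_j-2)$. Concretely: Theorem \ref{ribbonthm} applied at site $i$ gives $H_n(K_T(\ldots,p_i,\ldots)) = H_n(U) \oplus M_n$ with $M_n$ shifting by $[2]\{-2n\}$ under $p_i \mapsto p_i+2$; applied at site $j$ it gives the same shift under $p_j \mapsto p_j + 2$. Running one shift forward and the other backward, the net grading shifts cancel, and since $H_n(U)$ is a fixed summand we conclude $H_n(K_T(\ldots,p_i+2,\ldots,p_j-2,\ldots)) = H_n(K_T(\ldots,p_i,\ldots,p_j,\ldots))$. (When $n \geq 2$ there are always at least two twist regions, so such a $j \ne i$ exists; the $n=1$ case is a single ribbon knot and is covered directly by Theorem \ref{ribbonthm}, since the hypotheses then force $p_1 = q_1$.)

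Finally, I would run the elementary number-theoretic argument showing that any two tuples $(p_1,\ldots,p_n)$ and $(q_1,\ldots,q_n)$ with the same sum and with $p_i \equiv q_i \pmod 2$ for all $i$ are connected by a sequence of moves of the form "add $2$ to one coordinate, subtract $2$ from another". Indeed, $d_i := (q_i - p_i)/2$ is an integer for each $i$ and $\sum_i d_i = 0$; one transports units between coordinates with positive $d_i$ and coordinates with negative $d_i$ one step at a time, which is precisely a composition of the elementary moves above. Applying the invariance from the previous paragraph along this path gives $H(K_T(p_1,\ldots,p_n)) = H(K_T(q_1,\ldots,q_n))$ for $sl(n)$ homology in all three flavours, and the reduced HOMFLY statement then follows by Rasmussen's theorem realizing reduced HOMFLY homology as the $n \to \infty$ limit, exactly as in the proof of Corollary \ref{HHHkane}.

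I do not expect a serious obstacle here; the only point requiring a little care is the first one — checking that adding a ribbon at a single twist region of $K_T(p_1,\ldots,p_n)$ really does produce the $2$-component unlink regardless of the ribbon tangle $T$ and the other twisting. This is where the framing convention in the definition of ribbon tangle is essential: because each strand of $T$ is doubled with the blackboard framing, each twist region consists of two parallel oppositely-running strands, and a band surgery there caps them into a trivial circle while the rest of $T$, being a tangle of doubled strands with a band available, collapses to an unknotted circle as well. Once that local-to-global picture is confirmed, the rest is the bookkeeping above.
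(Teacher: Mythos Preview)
Your proposal is correct and follows exactly the approach the paper intends: the paper's entire argument is the single sentence ``By adding a ribbon to any of the twist regions we obtain the $2$-component unlink.  Hence we can apply Theorem \ref{ribbonthm} in this situation,'' with the shift-cancellation and combinatorial steps left implicit from the proof of Theorem \ref{kanethm}. You have simply spelled out those implicit steps (the $(p_i,p_j)\mapsto(p_i+2,p_j-2)$ move, the connectivity of tuples with fixed sum and parities, and the passage to HOMFLY via Rasmussen) in more detail than the paper does.
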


Of course, to show that in such examples you are generating infinitely many \emph{distinct} knots with the same Khovanov-Rozansky homologies, you need another invariant with which to distinguish between them.

\begin{figure}
\centerline{
{
\psfrag{p1}{$p_1$}
\psfrag{p2}{$p_2$}
\psfrag{p3}{$p_3$}
\psfrag{pn}{$p_n$}
\psfrag{T}{$T$}
\psfrag{ldots}{$\ldots$}
\psfrag{T+(D)}{$T^+(D)$}
\includegraphics[height=3in,width=4.5in]{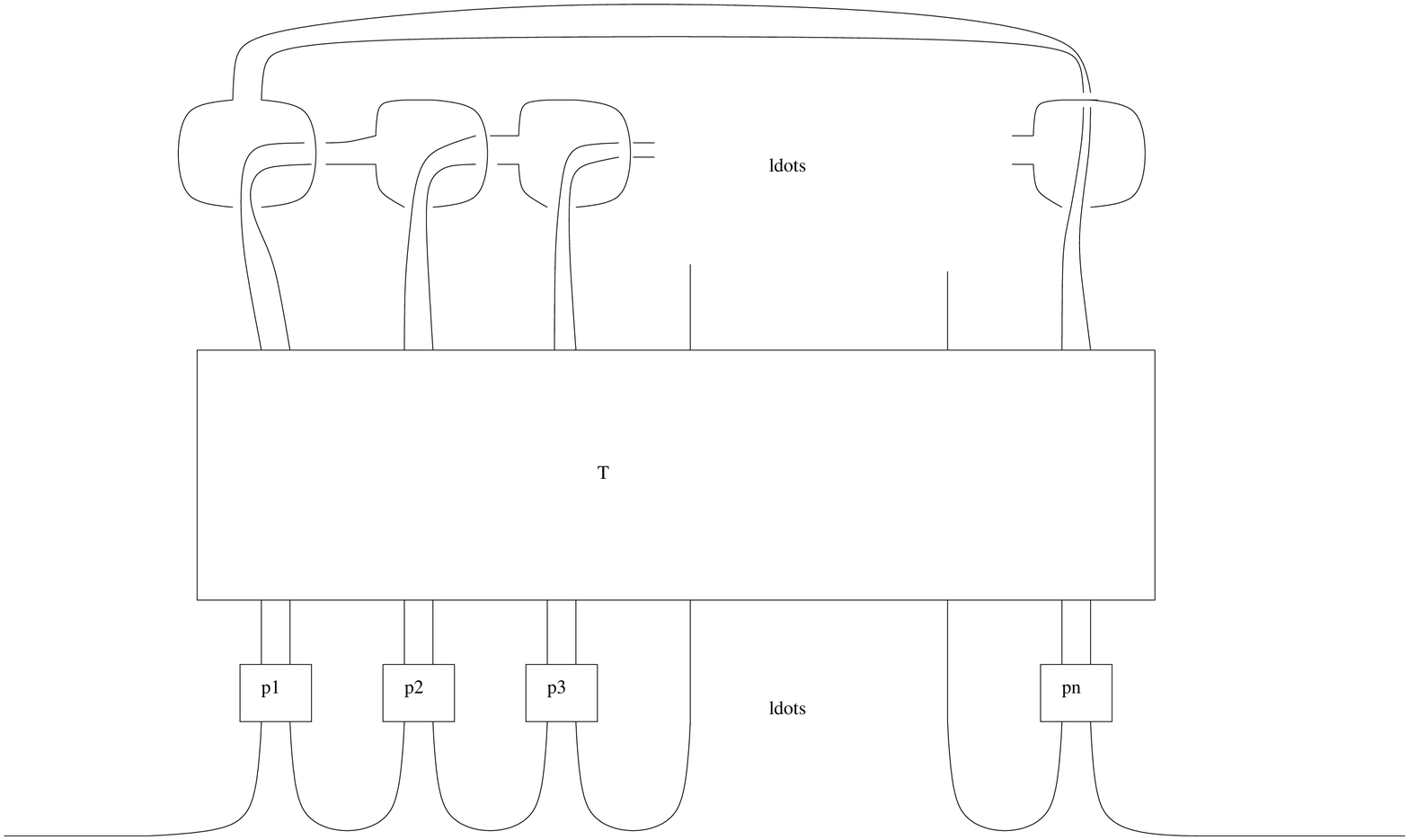}
}}
\caption{We have drawn a knot $K_T(p_1,p_2,\ldots,p_n)$ passing through the point $\infty$.  The knot depends on a ribbon tangle $T$ and $n$ integer parameters $p_1, p_2, \ldots, p_n$ describing the half-twists added to each ribbon.}
\label{kanenobugeneralized}
\end{figure}

\end{document}